\theoremstyle{theorem}
\newtheorem{theorem}{Theorem}[section]
\newtheorem{lemma}[theorem]{Lemma}
\newtheorem{prop}[theorem]{Proposition}
\newtheorem{definition}[theorem]{Definition}
\newtheorem{remark}[theorem]{Remark}
\newtheorem{condition}[theorem]{Condition}
\numberwithin{equation}{section}
\newcommand{\CC}{\mathbb{C}}
\newcommand{\RR}{\mathbb{R}}
\newcommand{\QQ}{\mathbb{Q}}
\newcommand{\ZZ}{\mathbb{Z}}
\newcommand{\CM}{\mathcal{M}}
\newcommand{\CL}{\mathcal{L}}
\newcommand{\RP}{\mathbb{R}P}
\newcommand{\CP}{\mathbb{C}P}
\newcommand{\AI}{A_\infty}
\newcommand{\NOV}{\Lambda_{nov}}
\begin{document}
\title[Non-displaceable Lagrangian submanifolds]
{Non-displaceable Lagrangian submanifolds and Floer cohomology with non-unitary line bundle}

\author{Cheol-Hyun Cho}
\address{Department of Mathematics, Seoul National University,
Kwanakgu Shinrim, San56-1 Seoul, South Korea, Email:
chocheol@snu.ac.kr}

\begin{abstract}
We show that in many examples the non-displaceability of Lagrangian submanifolds by Hamiltonian isotopy
can be proved via Lagrangian Floer cohomology with non-unitary line bundle.
The examples include all monotone Lagrangian
torus fibers in toric Fano manifold (which was also proven by Entov and Polterovich via the theory of symplectic quasi-states) and 
some non-monotone Lagrangian torus fibers.

We also extend the results by Oh and the author about the computations of Floer cohomology of Lagrangian torus fibers 
to the case of all toric Fano manifolds, removing the convexity assumption in the previous work. 
\end{abstract}

\maketitle

\bigskip
\footnote{This research has been partially supported by New Faculty Research Fund of Seoul National University.}
\section{Introduction}
This paper provides two improvements from the joint work with Yong-Geun Oh \cite{CO}. In \cite{CO}, we have
provided computations of Floer cohomology of Lagrangian torus fibers in toric Fano manifolds, which are
important examples of homological mirror symmetry, and symplectic topology of Fano manifolds \cite{HV},\cite{A},\cite{FOOO3},\cite{EP}. But in \cite{CO},
we have restricted ourselves to the case of some convex toric Fano manifolds because of the technical difficulty involving the transversality of moduli space. One of the result of this paper is to show how to remove this restriction, and prove
that the results of \cite{CO} hold for all toric Fano manifolds. 
The other result of this paper is to show that slight modification of Floer cohomology using non-unitary line bundle can be
successfully used to prove many Lagrangian intersection properties, which was not possible in the standard definition.
  
Lagrangian Floer cohomology was first defined by Floer \cite{Fl}, and generalized to the monotone case by
Yong-Geun Oh \cite{Oh1}. The definition in full generality, including obstruction  and deformation theory
has been established by the ingenious work of Fukaya, Oh, Ohta and Ono \cite{FOOO}.
The main feature of Floer cohomology of a Lagrangian submanifold is that 
Floer cohomology ring $HF(L,\psi(L))$ is independent of the choice of the 
Hamiltonian isotopy $\psi$. This provides a tool to study an intersection theory
of Lagrangian submanifolds, which has played an important role to study symplectic topology in
the last two decades.

We show examples of Lagrangian submanifolds which cannot be displaced from itself using Hamiltonian isotopy (non-displaceable, for short). The main tool is the Lagrangian Floer cohomology, with complex-valued two form 
on the symplectic manifold, which we call, the Floer cohomology with non-unitary line bundle. This
turns out to be more efficient than the standard definition for the intersection theoretic applications.

The new non-displaceable examples include  all monotone Lagrangian torus fibers, which was also proved by Entov and
Polterovich via a different method, namely the theory of symplectic quasi-states and quasi-measures \cite{EP2}.
We also find some new non-monotone non-displaceable Lagrangian fibers and general criterian to find one in Fano manifolds.  Recall that in the cases of the non-displaceable examples known before via Floer theoretic methods, like the Clifford torus (\cite{C},\cite{BEP}), or $\RP^n$ (\cite{Oh2}), 
the proof relies on the presence of strong symmetry (balanced) for the cancellation of quantum contributions
 so that Floer cohomology is isomorphic to the singular cohomology.
 In these new examples, they are not balanced but the new effect enables us more freedom so that we can still cancel quantum contributions out.
 
To explain this effect, recall that Lagrangian Floer cohomology has a generalization, which came from another motivation, the homological mirror symmetry conjecture \cite{K},\cite{Fu}. On this generalization, one allows, a flat unitary line bundle on a Lagrangian submanifold, or more
generally, one allows a unitary line bundle on $L$ whose curvature equals a closed two form $2\pi\sqrt{-1}B$ (defined  on $M$). This is usually called a
$B$-field. The modification we consider is a non-unitary version of them for short. As far as we know, non-unitary case has not been considered before in these cases. This maybe is due to the fact that the Floer homology with
unitary bundle has been used mainly for homological mirror symmetry and in such cases
 one usually identifies the formal parameter in the Novikov ring with a complex number. 
 For example in \cite{CO}, we have
identified $T^{2\pi}$ with a complex number $e^{-1}$ to show the equivalence of the obstruction $m_0$ with
the Landau-Ginzburg superpotential.

 But with such identification, 
the non-unitary version we consider in this paper is equivalent to the standard version (with a different symplectic 
form) in good cases. Also, there is a delicate point we also address in this paper that 
when formal parameter is identified with a complex number, the related Floer cohomology (which is called convergent version) does not provide
informations which are symplectically invariant.
(In the last section, we explain the example of Hirzbruch surface, where
some fibers have non-vanishing convergent Bott-Morse Floer cohomology, but still is displaceable by
some Hamiltonian isotopy. Hence, we show by this example that $HF(L,L) \neq HF(L,\psi(L))$ for the convergent version
of Floer cohomology.)

The computations in this paper, and that of \cite{CO},\cite{C} was carried out
using the Bott-Morse version of Lagrangian Floer homology, or an $m_1$-homology of
an $\AI$-algebra of Lagrangian submanifolds defined in \cite{FOOO}. It was shown in \cite{FOOO} that
transversal and Bott-Morse versions of Floer cohomologies are isomorphic. To compute the Bott-Morse version of Floer cohomology of a Lagrangian submanifold, one has to classify (pseudo)-holomorphic discs. In \cite{CO}, we classify all the holomorphic discs
with boundary on any Lagrangian torus fiber, and prove the Fredholm regularity of the standard complex structure for
all holomorphic discs. This enabled us to compute the Floer cohomology, except  one technical problem,
which involves the non-regularity of the standard complex structure for holomorphic spheres.
Hence, in \cite{CO}, we were only able to validate our results for convex toric Fano manifolds.

In this paper, we describe how to resolve this problem, using
a simple argument and  the machinery of \cite{FOOO} by combining the use of holomorphic discs and abstract perturbations
of Kuranishi structures together. Hence, we are able to extend the results of \cite{CO} to all toric Fano manifolds.

We would like to thank L. Polterovich for asking for the  Floer theoretic proof of his results with M. Entov.
We also thank for M. Entov, Kenji Fukaya and Yong-Geun Oh for helpful comments on the paper.
We remark that the idea in this paper \cite{C3}, has been generalized by Fukaya, Oh, Ohta and Ono \cite{FOOO3}.
\section{Floer cohomology with a closed two form $\widetilde{B}$}
\subsection{Floer cohomology with $B$-fields}
We first recall the definition of Floer cohomology with $B$-fields following the article of Fukaya \cite{Fu} (See also,\cite{FOOO}). We will consider a slight generalization of it in the next subsection.

Let $(M,\omega)$ be a symplectic manifold, and $B$ a closed real-valued two form on $M$, which is called $B$-field (in fact $B$ is from $H^2(M,\ZZ)$).
Let $L_i$ be a Lagrangian submanifold in $M$, and $\CL_i$ be a complex line bundle over $L_i$ for $i=1,2$.
Let $\nabla_i$ be a unitary connection on $\CL_i$, and it is required that
the curvature $F_{\nabla_i}$ of $\nabla_i$ satisfies for $i=1,2$, 
\begin{equation}\label{curv}
F_{\nabla_i} = 2 \pi \sqrt{-1} B.
\end{equation}

The definitions and properties of the standard Lagrangian Floer cohomology extends to the case with $B$-fields without much difficulty. We first recall the definition in the transversal case.

The $\NOV$-module $D_{(L_1,\CL_1),(L_2,\CL_2)}$ is defined as
$$D_{(L_1,\CL_1),(L_2,\CL_2)} = \oplus_{ p \in L_1 \cap L_2} Hom(\CL_{1,p},\CL_{2,p}) \otimes \Lambda_{nov,\CC}.$$
The differential $ \delta$ ($ =n_{0,0}$ in \cite{FOOO}) is defined using the moduli space of holomorphic strips.
For $p,q \in L_1 \cap L_2$, consider the following moduli space
$$\widetilde{M}(L_1,L_2;p,q) =\{ \phi:[0,1] \times \RR \to M| \textrm{Condition \ref{property}}. \}$$
\begin{condition}\label{property}
\begin{enumerate}
\item $\phi$ is pseudo-holomorphic.
\item $\phi(0,\tau) \in L_1, \phi(1,\tau) \in L_2.$
\item $\lim_{\tau \to -\infty} \phi(t,\tau) = p, \lim_{\tau \to \infty} \phi(t,\tau) = q$
\end{enumerate}
\end{condition}
After moding out by $\RR$-action, we get a quotient space $\CM(L_1,L_2;p,q)$. The moduli space
can be divided according to the homotopy classes, say $\beta$ of the maps $\phi$'s,
and denoted by $\CM(L_1,L_2;p,q;\beta)$. 
The holonomy contribution $Hol(\phi): Hom(\CL_{1,p},\CL_{2,p}) \to  Hom(\CL_{1,q},\CL_{2,q})$
is defined by 
$$Hol(\phi)(\alpha) = h_{\phi(\{ 1 \} \times \RR )}(\CL_2) \circ \alpha \circ h_{\phi(\{0\} \times \RR)}(\CL_1)^{-1}.$$
Here $h_{\phi(\{1\} \times \RR)}(\CL_2): \CL_{2,p} \to \CL_{2,q}$ is a parallel transport of $(\CL_2,\nabla_2)$
along the path $\phi(\{1\} \times \RR)$. The map $h_{\phi(\{0\} \times \RR)}(\CL_1)$ is defined similarly.

The symplectic area is given by $$\omega([\phi]) = \int_{D^2} \phi^*\omega \in \RR. $$

\begin{lemma}$($\cite{Fu} Lemma 2.7$)$
The following expression for $\phi$ only depends on the homotopy class of $\phi$.
$$exp(2\pi \sqrt{-1} \int_{D^2} \phi^*B) Hol(\phi(\alpha)) \otimes T^{\omega(\beta)}$$
\end{lemma}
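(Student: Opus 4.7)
The plan is to fix a smooth homotopy between two representatives and show that each factor in the product either is invariant on its own or combines with the others so that the variations cancel. Let $\phi_0,\phi_1$ be two maps in the same homotopy class (rel.\ boundary/asymptotic conditions) and pick a smooth homotopy $\Phi: [0,1]_s \times [0,1]_t \times \RR_\tau \to M$ with $\Phi(0,\cdot,\cdot)=\phi_0$, $\Phi(1,\cdot,\cdot)=\phi_1$, sending $\{t=0\}$ into $L_1$, $\{t=1\}$ into $L_2$, and with $\Phi(s,t,\pm\infty)=p,q$. Denote by $S_1 \subset L_1$ the $2$-chain traced out by the paths $\Phi(\cdot,0,\cdot)$, and similarly $S_2 \subset L_2$. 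The boundary of the $3$-chain $\Phi$ then consists of $\phi_1-\phi_0$, the chains $S_1,S_2$ (with appropriate signs), and degenerate pieces at $\tau=\pm\infty$ which collapse to the points $p,q$ and contribute nothing.

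For the area factor $T^{\omega(\beta)}$, apply Stokes's theorem to $\Phi^*\omega$: since $d\omega=0$, the $3$-chain integral vanishes, so $\omega(\phi_1)-\omega(\phi_0)=\pm\int_{S_2}\omega \pm \int_{S_1}\omega$, and each of the latter integrals vanishes by the Lagrangian condition $\omega|_{L_i}=0$.

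For the combined factor $\exp(2\pi\sqrt{-1}\int \phi^*B)\cdot Hol(\phi)(\alpha)$, the same Stokes argument applied to $\Phi^*B$ (using $dB=0$) gives
\[
\int \phi_1^*B - \int \phi_0^*B = \int_{S_2} B - \int_{S_1} B
\]
up to the fixed orientation convention. On the other hand, the two parallel transport maps $h_{\phi_0(\{0\}\times\RR)}(\CL_1)$ and $h_{\phi_1(\{0\}\times\RR)}(\CL_1)$ are along two paths in $L_1$ from $p$ to $q$ which cobound $S_1$, so by the standard Stokes/Chern--Weil identity their ratio is $\exp\!\bigl(-\int_{S_1} F_{\nabla_1}\bigr) = \exp\!\bigl(-2\pi\sqrt{-1}\int_{S_1}B\bigr)$, using the curvature hypothesis (\ref{curv}). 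The analogous formula holds on the $L_2$ side. Substituting into the definition of $Hol(\phi)(\alpha)$ shows that the change in the holonomy factor is precisely the inverse of the change in the $\exp(2\pi\sqrt{-1}\int\phi^*B)$ factor, so their product is unchanged.

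The one point that requires real care is the bookkeeping of orientations: we must check that the sign with which $S_1$ appears on $\partial \Phi$ when integrating $B$ matches the sign with which it appears when expressing $h_{\phi_1}$ in terms of $h_{\phi_0}$ via the bounding loop, and similarly for $S_2$. These two signs are opposite for $\Phi$'s boundary orientation versus the loop-bounding orientation, which is exactly what allows the cancellation between the $B$-field exponential and the holonomy variation. The remaining content of the lemma is then just the curvature normalization $F_{\nabla_i}=2\pi\sqrt{-1}B$, which is precisely calibrated so that the two contributions cancel exactly (not merely up to an integer multiple of $2\pi\sqrt{-1}$).
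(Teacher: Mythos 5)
Your argument is correct and follows essentially the same route the paper intends: the $T^{\omega(\beta)}$ factor is homotopy-invariant by Stokes because $\omega$ is closed and vanishes on $TL_i$, while the variation of the holonomy factor along the boundary paths is $\exp(\mp\int_{S_i}F_{\nabla_i})$, which the curvature condition $F_{\nabla_i}=2\pi\sqrt{-1}B$ calibrates to cancel exactly against the variation of $\exp(2\pi\sqrt{-1}\int\phi^*B)$. The paper itself does not reprove this lemma (it cites \cite{Fu}, Lemma 2.7) and only records the same two-part cancellation as a one-line sketch in its proof of the analogous lemma for the non-unitary form $\widetilde{B}$, so your write-up is simply a detailed expansion of that argument.
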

Then the boundary map $\delta:D_{(L_1,\CL_1),(L_2,\CL_2)} \to D_{(L_1,\CL_1),(L_2,\CL_2)}$ is defined 
for $\alpha \in Hom(\CL_{1,p},\CL_{2,p})$ as
$$\delta(\alpha) = \sum_{q,\beta,\phi \; \textrm{with} \; \textrm{vir. dim.}(\CM(L_1,L_2;p,q;\beta)=0)}
\pm exp(2\pi \sqrt{-1} \int_{D^2} \phi^* B) \cdot Hol(\phi(\alpha)) \otimes T^{\omega(\beta)}. $$
Here signs can be determined explicitly if $L_1,L_2$ are (relatively) spin and are equipped with  choices of
spin structures.
Additional operations $n_{k,l}$ may be defined as in (\cite{FOOO}) to give $D$ the structure of $\AI$-bimodule over the $\AI$-algebras of $L_1$ and $L_2$. Consideration of $\AI$-bimodule structure
is essential when one actually needs to deform the $\AI$-algebra to define Floer cohomology, but in our
case, it is not nessesary, hence we refer readers to \cite{FOOO} for more details on them.
For the case of monotone Lagrangian submanifolds, it is not necessary to use the full machinery, and
we may use the approach of Oh \cite{Oh2}, Cho-Oh \cite{CO}, or Biran-Cornea \cite{BC} with the corresponding modifications. In the case considered in this paper, $\delta^2 =0$ holds when $(L_2,\CL_2,\nabla_2)$ is Hamiltonian equivalent to $(L_1,\CL_1,\nabla_1)$ in the following sense.
i.e. there exist a function $f:M \times [0,1] \to \RR$, a complex line bundle $\CL \to L \times [0,1]$ and
its connection $\nabla$ with the following properties. Let $\psi_t$ be the Hamiltonian isotopy generated
by $f$.
\begin{enumerate}
\item $\psi_1(L_1) = L_2$.
\item $(\CL, \nabla)|_{L \times \{0\}} = (\CL_1,\nabla_1),\;\; (\CL, \nabla)|_{L \times \{1\}} = \psi_1^*(\CL_2,\nabla_2).$

\item $F_\nabla = 2\pi \sqrt{-1}\, \Psi^*B.$ Here $\Psi:L_1 \times [0,1] \to M$ defined by $\Psi(x,t) = \psi_t(x)$.
\end{enumerate}

We remark that given $(L_1,\CL_1,\nabla_1)$ and a Hamiltonian isotopy $\psi_2$,  one can find $(\CL_3,\nabla_3)$  such that
$(L_1,\CL_1,\nabla_1)$ and $(\psi_2(L_1),\CL_3,\nabla_3)$ are Hamiltonian equivalent to each other in the above sense.
(For this, choose $\CL = \pi_1^* \CL_1$, $\nabla'= \pi_1^* \nabla_1$ using the projection $\pi_1:L_1 \times [0,1] \to L_1$ and notice that $F_{\nabla'} - 2\pi \sqrt{-1}\, \Psi^*B$ is exact.
Choose one form $\alpha$ on $L_1\times [0,1]$ with 
$F_{\nabla'} - 2\pi \sqrt{-1}\, \Psi^*B = d\alpha$. Take $\nabla = \nabla' - \alpha$, and define $(\CL_2,\nabla_2)$ using the above relation.)

The homology of $\delta$ is the Floer cohomology $HF((L_1,\CL_1,\nabla_1),(L_2,\CL_2,\nabla_2))$ for Hamiltonian equivalent pairs, and  it is independent of a Hamiltonian isotopy $\psi$, and it is isomorphic to the Bott-Morse Floer cohomology, which is a homology of an $\AI$-algebra of Lagrangian submanifold  with modification as follows.
(We refer readers to \cite{FOOO} for details and notations) 
The $\AI$-algebra operations $m_k:C(L)^{\otimes k} \to C(L)$ for  $k \in \mathbb{N} \cup \{0\}$ on geometric chains on $L$ are defined as
\begin{equation}
m_{k,\beta}(P_1,\cdots,P_k) = 
[\CM_1(L,J,\beta,P_1,P_2,\cdots,P_k),ev_0]  \cdot T^{\omega(\beta)},
\end{equation}
where $P_i$'s are chains on $L$, and $\CM(L,J,\beta,\cdots,P_*)$ is the moduli space of $J$-holomorphic discs with
homotopy class $\beta$ intersecting $P_1,\cdots,P_k$ with an additional marked point for evaluation.
And this may be modified with the presence of $B$-field as
\begin{equation}
m_{k,\beta}^B(P_1,\cdots,P_k) = m_{k,\beta}(P_1,\cdots,P_k)\cdot (Hol_{\partial \beta} \mathcal{L})exp(2\pi \sqrt{-1} \int_{\beta} B)
\end{equation} 
The factor $(Hol_{\partial \beta} \mathcal{L})$ records the holonomy around the boundary of $J$-holomorphic discs.
In the case that $B\neq 0$, the holonomy factor alone is not enough to make it well-defined for each
homotopy class.

It is well-known that the theorems in \cite{FOOO} can be carried over to the
situation with $B$-fields, since the only modification is adding an additional factor to the
boundary operations, which vanishes if the homotopy class $\beta$ is that of constant discs.
 
In particular, the following theorem also holds true with the presence of $B$-fields.
In the case of Lagrangian submanifolds that we consider in this paper, it is weakly obstructed with the bounding cochain $B=0$, hence we omit the bounding cochains from the statement. Also, for simplicity, instead of
$HF((L_1,\CL_1,\nabla_1),\psi_*(L_1,\CL_1,\nabla_1))$, we simply write $HF(L_1,\psi(L_1))$.
\begin{theorem}$($ THEOREM G \cite{FOOO} $)$
$HF(L,\psi(L))$ is independent of $\psi$ and it coincides with  the homology of $\AI$-algebra of $L$.
\end{theorem}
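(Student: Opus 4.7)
The plan is to verify that the argument of Theorem~G of FOOO carries over verbatim once one has a $B$-field together with line bundles whose connections satisfy (\ref{curv}). The essential structural observation is the cited lemma of Fukaya: the combined weight $\exp(2\pi\sqrt{-1}\int_\beta B)\cdot\mathrm{Hol}_{\partial\beta}(\CL)\cdot T^{\omega(\beta)}$ attached to a holomorphic curve depends only on its homotopy class $\beta$. Thus this enriched weight plays exactly the same role as the Novikov weight $T^{\omega(\beta)}$ in the standard theory; it is locally constant on every moduli space, and in particular it is inert with respect to any Kuranishi-theoretic perturbation one uses to achieve transversality. Once this is internalised, every chain map, chain homotopy, and spectral sequence in FOOO extends by simply replacing $T^{\omega(\beta)}$ by the enriched weight at each occurrence.

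For the invariance of $HF(L,\psi(L))$ under the choice of Hamiltonian isotopy, I would run the standard continuation-map argument. Given two isotopies $\psi^{(0)},\psi^{(1)}$ producing Hamiltonian equivalent data $(L,\CL_j,\nabla_j)$ in the sense of the three properties listed before the statement, I would interpolate by a one-parameter family $\psi^{(s)}$ together with compatible line bundles and connections (the construction described right after condition~(3) gives exactly such families) and count pseudo-holomorphic strips for the associated $s$-parametrised Cauchy--Riemann equation, weighted by the enriched factor above. The usual homotopy-of-homotopies argument, applied to two choices of interpolating data, shows that the resulting map on homology is a canonical isomorphism.

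To identify $HF(L,\psi(L))$ with the homology of the $\AI$-algebra of $L$, I would specialise $\psi$ to be the time-one map of a $C^2$-small Morse function $f$ extended from $L$. Then $L\cap\psi(L)$ is in bijection with $\mathrm{Crit}(f)$, and by the standard comparison between short Floer strips and gradient trajectories the differential $\delta$ on the transversal complex agrees, in the lowest energy filtration level, with the Morse differential of $f$. The higher-energy contributions to $\delta$ are governed by pearly/cluster-type moduli combining gradient trajectories with pseudo-holomorphic discs; these precisely reproduce the deformed operations $m_{k,\beta}^B$ on geometric chains of $L$, with each summand carrying its topological weight. Running the resulting spectral sequence degenerates at the $E_2$-page to the $m_1^B$-homology of the $\AI$-algebra, which is the Bott--Morse side of the claim.

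The main obstacle is the analytic one: achieving simultaneous transversality for the strip moduli defining $\delta$, the disc moduli defining the $m_{k,\beta}^B$, and the intermediate moduli realising the comparison, in the generality of toric Fano manifolds where the standard $J$ is not regular for spheres (the issue already highlighted in the introduction as the reason \cite{CO} required convexity). This forces one to work inside the Kuranishi-structure framework of FOOO with abstract multisections. The saving grace is that neither the $B$-field exponential nor the holonomy factor interacts with these multisections, so once the perturbation data of FOOO are fixed and $\delta^2=0$ is established via the weakly unobstructed condition with vanishing bounding cochain, the inclusion of the enriched weights is purely formal and the conclusion of Theorem~G transfers without modification.
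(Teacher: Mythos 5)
Your proposal is correct and matches the paper's treatment: the statement is quoted as Theorem G of \cite{FOOO}, and the paper's only added content is precisely your key observation that the enriched weight $exp(2\pi\sqrt{-1}\int_\beta B)\cdot Hol_{\partial\beta}(\CL)\cdot T^{\omega(\beta)}$ depends only on the homotopy class, so it is inert under the Kuranishi perturbations and the entire FOOO machinery (continuation maps, comparison with the Bott--Morse $\AI$-model) carries over verbatim. No further argument is given or needed in the paper itself.
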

As in \cite{C},\cite{CO}, we will show the non-vanishing of the homology of an $\AI$-algebra (in a non-unitary setting) to prove the non-displaceability of Lagrangian submanifolds.

\subsection{Non-unitary case with a complex valued two form $\widetilde{B}$}
In this subsection, we consider slight modification using a complex valued two form $\widetilde{B}$.
We consider a closed complex valued two form $\widetilde{B} = B_{re} + \sqrt{-1} B_{im}$ such that
the curvature $F_{\nabla_i}$ of the connection $\nabla_i$ on $\CL$ satisfies for $i=1,2$,
$$F_{\nabla_i} = \widetilde{B},$$
instead of (\ref{curv}).
Moreover, we do not require $\widetilde{B}/(2\pi i)$ to lie in the image of $H^2(M;\ZZ) \to H^2(M;\CC)$ as long as there exist 
$(\CL_i,\nabla_i)$ satisfying the above curvature condition for $i=1,2$. (i.e. we only require
integrality condition when restricted to $L_i$, not necessarily on $M$.)

In the case that $B_{re}=0, B_{re}/(2\pi i) \in H^2(M,\ZZ)$, then $\widetilde{B}/(2\pi i)$ becomes a $B$-field discussed in the previous subsection.
 
The contribution from $\widetilde{B}$ is given as
\begin{equation}\label{curv2}
	exp( 2\pi \sqrt{-1} \int_{\beta} B)(Hol_{\partial \beta} \mathcal{L}) \cdot T^{\omega(\beta)} \;\;\; \Rightarrow \;\;\; exp( \int_{\beta} \widetilde{B})(Hol_{\partial \beta} \mathcal{L}) \cdot T^{\omega(\beta)} 
\end{equation}
\begin{lemma}
The new expression in (\ref{curv2}) also depends only on the homotopy class of $\beta$.
\end{lemma}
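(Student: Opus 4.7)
The plan is to mimic the argument for the unitary $B$-field case (Fukaya's Lemma 2.7), observing that no step in it actually uses the unitarity of $\mathcal{L}$ or the fact that the form is purely imaginary. The only ingredients are: closedness of $\widetilde{B}$, Stokes' theorem, and the standard relation between the change in holonomy of $(\mathcal{L},\nabla)$ across an annulus and the integral of the curvature over that annulus.

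Concretely, given two representatives $\phi_0,\phi_1\colon (D^2,\partial D^2)\to (M,L)$ of the class $\beta$, I would pick a smooth homotopy $H\colon [0,1]\times D^2\to M$ between them sending $[0,1]\times\partial D^2$ into $L$, producing an annulus $A=H|_{[0,1]\times\partial D^2}\subset L$ whose boundary circles are $\gamma_i=\partial\phi_i$. Closedness of $\widetilde{B}$ together with Stokes' theorem on the cylinder $[0,1]\times D^2$ immediately gives
$$\int_{\phi_1}\widetilde{B}-\int_{\phi_0}\widetilde{B}=\int_{A}\widetilde{B}.$$
Independently, after trivializing $A^{*}\mathcal{L}$ and writing $\nabla=d+\alpha$ for a complex-valued $1$-form $\alpha$, the holonomy around a loop is $\exp(-\oint\alpha)$, and Stokes applied to $d\alpha=F_\nabla=\widetilde{B}|_{L}$ yields
$$\frac{Hol_{\gamma_1}(\mathcal{L})}{Hol_{\gamma_0}(\mathcal{L})}=\exp\left(-\int_{A}\widetilde{B}\right).$$
Multiplying, the two $\exp(\pm\int_{A}\widetilde{B})$ factors cancel, and since $T^{\omega(\beta)}$ is manifestly homotopy invariant (as $\omega$ is closed), the entire expression depends only on $[\beta]$.

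The only point that requires attention, and really the only place where this lemma differs in spirit from Lemma 2.7 of \cite{Fu}, is making sure the holonomy--curvature identity survives when $\nabla$ is non-unitary, i.e.\ when $\alpha$ is allowed to take arbitrary complex (rather than purely imaginary) values. This is routine: the derivations of $Hol=\exp(-\oint\alpha)$ (from the parallel transport ODE, which is abelian for a line bundle) and of $\oint_{\gamma_1}\alpha-\oint_{\gamma_0}\alpha=\int_{A}d\alpha$ (from Stokes) never invoke a Hermitian structure and go through verbatim. The only effect is that the two factors we are cancelling are arbitrary nonzero complex numbers rather than unit modulus complex numbers, which of course does not affect the cancellation. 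Hence the argument of Lemma 2.7 in \cite{Fu} transports to the non-unitary complex-valued setting without modification.
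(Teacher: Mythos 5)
Your proposal is correct and follows essentially the same route as the paper, which simply notes that $T^{\omega(\beta)}$ is homotopy invariant because $\omega$ is closed and vanishes on $TL$, and that the factor $\exp(\int_\beta \widetilde{B})\,(Hol_{\partial\beta}\mathcal{L})$ is homotopy invariant because of the curvature relation $F_{\nabla}=\widetilde{B}$. You have merely unpacked the Stokes/holonomy cancellation that the paper leaves implicit, and your observation that nothing in the argument uses unitarity (only that the factors now live in $\CC^*$ rather than $S^1$) is exactly the point the paper is making.
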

\begin{proof}
The part of (\ref{curv2}),
$ T^{\omega(\beta)}$  depends only on the homotopy class of $\beta$, since
$ \omega$ are closed two forms which vanish on $TL$. 
The part, $exp( \int_{\beta} \widetilde{B})(Hol_{\partial \beta} \mathcal{L})$ also
 depends only on the homotopy class of $\beta$, due to the relation $F_{\nabla_i} = \widetilde{B}$.
\end{proof}
We remark that in many cases,  $\widetilde{B}$ maybe set to be zero, since considering flat {\it non-unitary} line
bundle is good enough to detect non-displaceability of some Lagrangian submanifolds.
Also, in this paper, we will choose $\widetilde{B}$ so that its support is away from the given Lagrangian submanifolds.
In such a case, integrality condition is automatically satisfied. It is easy to check that if $\widetilde{B}$ satisfies integrality condition when restricted on $L$, then, it also satisfies such a condition on $\psi(L)$ for a Hamiltonian isotopy $\psi$. 

The rest of the story on the definition of Floer homology and their properties are almost identical to the
case of $B$-fields. The benefit of the introducing $\widetilde{B}$ and non-unitary line bundle is that,
as it can be seen in (\ref{curv2}), we now allow the additional contribution to be in $\CC$, whereas
the traditional approach only allows it to be in $S^1 \subset \CC$. 
(We remark that in \cite{FOOO3}, one of their idea is to generalize this further to an element of the Novikov ring based on
our observation)

Extension of Floer cohomology to the non-unitary line bundle does not seem to be
related to the mirror symmetry directly. The reason is that in mirror symmetry,  
one usually identifies the formal parameter $T^{2\pi}$ with a specific value, say $e^{-1}$, and after the identification, the above may be
interpreted as introducing a new symplectic form $\omega + 2\pi B_{re}$ (if it is symplectic):
\begin{equation}\label{curv21}
 e^{\int_{\beta} \widetilde{B}}(Hol_{\partial \beta} \mathcal{L}) \cdot e^{\omega(\beta)/2\pi}
 = e^{  \int_{\beta} (\widetilde{B} + \omega/2\pi)} (Hol_{\partial \beta} \mathcal{L})
\end{equation}
But, in the realm of symplectic geometry, the difference of formal parameter version and
convergent version can be clearly observed (see the last section), hence Floer cohomology with $\widetilde{B}$ in the
Novikov ring coefficient,  is different from the Floer cohomology with a new symplectic form.

\section{Non-displaceable Lagrangian submanifolds}
\subsection{Lagrangian fibers in toric Fano manifolds}
In \cite{C} and \cite{CO}, Floer cohomology of a Lagrangian fiber has been computed 
in a general toric Fano manifold (with convexity assumption, which we remove later in this paper).
The Lagrangian fibers with non-vanishing Floer cohomology in these cases, like the Clifford torus, has
 strong symmetry conditions on the position of the fiber and the shape of the
moment polytope that we start with, hence they were called balanced fibers.
For example, in \cite{CO}, the Hirzbruch surface was shown to have
no Lagrangian fiber with non-vanishing Floer cohomology. 

With the introduction of $\widetilde{B}$, we show that such symmetry condition for
the non-vanishing Floer cohomology, can be weakened considerably, hence providing many more examples of non-displaceable Lagrangian fibers. For example, we prove that the monotone fiber in the Hirzbruch surface is non-displaceable. In fact, we prove that every monotone Lagrangian fiber of toric Fano manifold is non-displaceable.

Let us first recall the notations of toric Fano manifolds from \cite{CO}, to which we refer readers for details.
  For a given polytope $P$, and we consider the corresponding symplectic toric manifold $(M,\omega)$. Let $v_i$ for $i=1,\cdots,N$ be
  the inward normal vectors of each facet of $P$ which define one dimensional generators of the dual fan $\Sigma$.
  Denote by $V(v_i)$ the corresponding toric divisor. Also, recall that the homotopy classes of  holomorphic discs 
  with Maslov index two may be enumerated as $\beta_1,\cdots,\beta_N$ where $\beta_i$ corresponds to
  each facet $v_i$. 

\begin{lemma}\label{Bi}
There exist closed real-valued two forms $B_i$ on $M$ which vanish on $TL$ such that for $i,j = 1,\cdots,N$, we have
 $$\int_{\beta_j} B_i = \delta_{ij}.$$  
\end{lemma}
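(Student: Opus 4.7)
The plan is to realize each $B_i$ as a Thom-class representative of the Poincar\'e dual of the toric divisor $V(v_i)$, chosen with support disjoint from $L$. The key topological input is the intersection formula $\beta_j \cdot V(v_i) = \delta_{ij}$, which is built into the classification of Maslov index two holomorphic discs on toric Lagrangian fibers in \cite{CO}. Since $L$ lies over an interior point of the moment polytope, it sits inside the open $T^n$-orbit of $M$ and is therefore disjoint from every toric divisor $V(v_i)$. Each $V(v_i)$ is a smooth hypersurface, so its normal bundle admits a Thom form; I would shrink the support of this form into a tubular neighborhood $U_i$ of $V(v_i)$ small enough that $U_i \cap L = \emptyset$, and take the resulting closed real two-form as $B_i$. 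Then $B_i$ represents the Poincar\'e dual of $[V(v_i)]$ in de Rham cohomology, and since it vanishes on an open neighborhood of $L$, we automatically have $B_i|_{TL} = 0$.

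To verify $\int_{\beta_j} B_i = \delta_{ij}$, I would take any smooth representative $\phi_j:(D^2,\partial D^2)\to(M,L)$ of the class $\beta_j$. A standard Stokes argument, using $dB_i=0$ together with $B_i|_L=0$, shows that the integral depends only on the relative homotopy class, so one may use the explicit holomorphic disc produced in \cite{CO}. That disc meets $V(v_j)$ transversally at a single interior point and misses $V(v_k)$ for $k\neq j$, so $\phi_j^*B_i$ is compactly supported in the interior of $D^2$, and the Thom-class property yields $\int_{\beta_j}B_i=\phi_j\cdot V(v_i)=\delta_{ij}$. I do not expect any substantive obstacle: the only point worth flagging is that the classes $[V(v_i)]$ typically satisfy linear relations in $H^2(M;\RR)$, but this is harmless, since the required identity is an assertion about the disc/divisor intersection matrix of \cite{CO}, not about the cohomological intersection pairing on $H_*(M;\RR)$.
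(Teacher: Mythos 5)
Your proposal is correct and follows essentially the same route as the paper: both take $B_i$ to be a Poincar\'e dual (Thom-type) representative of the toric divisor $V(v_i)$ supported in a small neighborhood of $V(v_i)$ disjoint from $L$, and both reduce the identity $\int_{\beta_j}B_i=\delta_{ij}$ to the classification of Maslov index two discs in \cite{CO}, which shows that a disc in class $\beta_j$ meets $V(v_j)$ with total intersection number one and misses the other toric divisors. Your added remarks on the Stokes argument for homotopy invariance and on why $B_i$ vanishes near $L$ only make explicit what the paper leaves implicit.
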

\begin{proof}
 We have shown that the Maslov index is given by twice the intersection number with toric divisors(Theorem 5.1 \cite{CO}),
 and from the classification theorem(Theorem 5.2 \cite{CO}), each holomorphic discs of class $\beta_i$ (with Maslov index two) intersects non-trivially  only with $V(v_i)$ and it does not intersect any other toric divisors. Hence we may choose Poincare dual closed two form $B_i$ for each
 $V(v_i)$, whose support lies in a sufficiently close neighborhood of $V(v_i)$.
These satisfy the equations above since each Maslov index two holomorphic disc is a part of a holomorphic sphere which does not intersect the corresponding toric divisor elsewhere.
 This proves the lemma.
\end{proof}

We consider the case of monotone Lagrangian fibers first, and we will
discuss general case in the next subsection.
Recall that a Lagragrangian submanifold is called monotone if for any $\beta \in \pi_2(M,L)$,
we have $ \mu(\beta) = c \cdot \omega(\beta)$ for some fixed constant $c \in \RR_{>0}$.
This implies that the symplectic manifold is also monotone, in the sense 
that for any spherical homology class $\alpha \in \pi_2(M)$, $\omega(\alpha) = 2c \cdot c_1(TM)(\alpha)$.

\begin{theorem}\label{thm:mono}
Let $L$ be any monotone Lagrangian torus fiber in any toric Fano manifold $M$.
There exists $\widetilde{B}$ such that the Bott-Morse Floer cohomology of $L$ with $\widetilde{B}$
is non-trivial. Hence $L$ is not displaceable from itself by any Hamiltonian isotopy of $M$.
\end{theorem}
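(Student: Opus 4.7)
The plan is to reduce the non-displaceability of a monotone Lagrangian torus fiber to the existence of a critical point of a Landau--Ginzburg type superpotential, and then to use the freedom afforded by Lemma~\ref{Bi} (with complex coefficients) to guarantee such a critical point exists. Concretely, using the classification of Maslov-index-two holomorphic discs from \cite{CO} together with the $\widetilde{B}$-twisted operations~(\ref{curv2}), I would write the $m_0$ term of the $A_\infty$-algebra of $(L,\mathcal{L})$ (with $\mathcal{L}$ a non-unitary flat line bundle of holonomies $y=(y_1,\ldots,y_n)\in (\CC^*)^n$ around a basis of $\pi_1(L)$) in the form
$$
m_0 \;=\; W(y)\cdot [L], \qquad W(y) \;=\; T^{A}\sum_{i=1}^{N} c_i\, y^{v_i},
$$
where $c_i = \exp\bigl(\int_{\beta_i}\widetilde{B}\bigr) \in \CC^*$ and $A = \omega(\beta_i)$ is independent of $i$ by monotonicity of $L$ (all $\beta_i$ having Maslov index two). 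In particular $m_0$ is a scalar multiple of the unit, so $(L,\mathcal{L})$ is weakly unobstructed.

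Next I would appeal to the general formula (as in \cite{CO}, modified by~(\ref{curv2})) that identifies the Bott--Morse $m_1$ on the $H^1(L;\CC)$-generators with multiplication by the logarithmic derivatives $y_j\partial_{y_j}W$. At a critical point of $W$ these derivatives all vanish, so the deformed differential vanishes on $H^*(L;\CC)=H^*(T^n;\CC)$ and the Bott--Morse Floer cohomology is isomorphic to $H^*(T^n;\CC)\neq 0$. Combined with the $\widetilde{B}$-version of the invariance theorem (Theorem~G of \cite{FOOO}) cited above, this forces $L\cap\psi(L)\neq\emptyset$ for every Hamiltonian isotopy $\psi$. It then remains to produce $\widetilde{B}$ making $W$ have a critical point. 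Applying Lemma~\ref{Bi} with complex coefficients, the ansatz $\widetilde{B}=\sum_i\alpha_i B_i$ realizes \emph{arbitrary} $c_i\in\CC^*$ via $c_i = e^{\alpha_i}$. Fixing $y=(1,\ldots,1)$, the critical point condition collapses to the $\CC$-linear system $\sum_{i=1}^N c_i v_i = 0 \in \CC^n$. Because the inward normals $v_1,\ldots,v_N$ span $\RR^n$ and compactness of $M$ forces $N\ge n+1$, this system has an $(N-n)$-dimensional solution space over $\CC$, on which the open condition $c_i\neq 0$ for every $i$ is satisfied generically; any such solution gives the required $\widetilde{B}$.

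The main obstacle is exactly the transversality issue that forced the convexity hypothesis in \cite{CO}: the standard complex structure on a general toric Fano need not be Fredholm regular on moduli spaces of holomorphic spheres, so the $m_0$ computation above is not \emph{a priori} rigorous. This is to be resolved by combining the (already established) regularity of the standard complex structure on disc moduli with an abstract Kuranishi-structure perturbation supported on sphere-bubble strata only, so that the $\beta_i$-contribution to $m_0$ retains the stated form $T^{A}c_i y^{v_i}$. This extension of \cite{CO} to arbitrary toric Fano is the other main technical input of the paper and will be used here as a black box.
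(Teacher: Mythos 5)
Your overall strategy is the same as the paper's: twist by $\widetilde{B}=\sum_i\alpha_iB_i$ using Lemma~\ref{Bi}, reduce the vanishing of the Maslov-index-two part of the differential on the degree-one generators to the linear system $\sum_{i=1}^N c_iv_i=0$ with $c_i=e^{\alpha_i}\in\CC^*$, and use monotonicity to put all disc contributions at a single energy level $T^{A}$. That core computation matches Lemma~\ref{lem} of the paper exactly. However, there are two genuine gaps.

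The main one is the step from ``$y=(1,\dots,1)$ is a critical point of $W$'' to ``the Bott--Morse Floer cohomology is isomorphic to $H^*(T^n;\CC)\neq 0$.'' Vanishing of the logarithmic derivatives of $W$ only gives $m_{1,2}^{\widetilde{B}}(C_i)=0$ on the degree-one generators; it does not by itself show that the full differential vanishes on all of $H^*(L)$, nor that the $C_i$ fail to be $m_1$-coboundaries. This is precisely the content of Proposition~\ref{compufloer}, whose proof occupies Section~4.1: one uses the Leibniz rule \eqref{leibniz} to form the iterated products $m_2(\cdots m_2(C_1,C_2)\cdots,C_n)=\pm PD(pt)+\sum_ka_kP_kT^{\lambda_k}$ and argues that this class cannot be exact because its energy-zero part is a point class, which cannot lie in the image of the classical boundary $m_{1,0}$. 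The coboundary issue is not cosmetic: on a general (non-convex) toric Fano one must introduce abstract Kuranishi perturbations for the Maslov index $\geq 4$ moduli spaces (because of non-regular sphere bubbles), and after perturbation the virtual chains $m_{1,\mu}(P)$ for $\mu\geq 4$ have their expected dimensions, so a higher-degree chain $P$ could a priori have $m_{1,\mu}(P)$ of dimension $n-1=\dim C_i$. Your proposal treats the extension of \cite{CO} as a black box for the $m_0$ computation, but the black box you actually need is the statement that the cohomology is determined by $m_{1,2}(C_i)$ alone, and that requires the product argument above, not just a perturbation scheme.

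The second, smaller gap: you assert that on the $(N-n)$-dimensional solution space of $\sum_ic_iv_i=0$ the condition $c_i\neq 0$ for all $i$ holds ``generically.'' That is only true if the solution space is not contained in any coordinate hyperplane $\{c_i=0\}$, which must be checked; the paper does so using completeness of the fan (for each $i$ one exhibits a relation $v_{i_{n+1}}=\sum_jd_jv_{i_j}$ with $d_1\neq 0$, where $v_{i_1}=v_i$ spans a maximal cone together with $v_{i_2},\dots,v_{i_n}$). Without this verification the existence of a solution with all $c_i\in\CC^*$ is not established.
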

\begin{remark}
Entov and Polterovich proved the same result using the the theory of quasi-state and quasi-measures \cite{EP}.
\end{remark}
\begin{remark}
There are many examples of monotone Lagrangian torus fibers satisfying this fact. 
More explicitly, note that toric Fano manifold may be given a symplectic form so that its moment polytope is reflexive. 
(i.e. its facets are defined by $$<x,v_i> \geq -1,$$ and $0$ being the only integral point in the interior of
the polytope).
Then, Lagrangian fiber corresponding to the origin, satisfies this property since the symplectic
areas of the holomorphic discs are all equal to $2\pi$ from the area formula of \cite{CO}.
Hence, such fibers are always non-displaceable by Hamiltonian isotopy.
\end{remark}
\begin{proof}
As the Maslov index of any non-trivial holomorphic disc is positive, we may write the Bott-Morse differential $m_1$ as
\begin{eqnarray*}
m_1 &=& m_{1,0} + \sum_{\beta \in \pi_2(M,L)} m_{1,\beta} \\
&=& m_{1,0} + m_{1,2} + m_{1,4} + \cdots .
\end{eqnarray*} 
where $m_{1,0}$ is the classical boundary operator of taking the boundary of a chain and
$m_{1,\beta}$ records quantum contributions from $J$-holomorphic
discs of homotopy class $\beta$ intersecting a chain. And by $m_{1,\mu}$, we denote
the sum of $m_{1,\beta}$ over all $\beta$ with the Maslov index $\mu(\beta)= \mu$.
To distinguish with the standard Floer cohomology, we write $m_{1,\mu}^{\widetilde{B}}$ in the
case of Floer cohomology with $\widetilde{B}$.

First we choose the following generators $C_i$ of $H^1(L)$ for $i=1,\cdots,n$.
\begin{definition}\label{generator}
Let $l_i$ be a circle $1\times \cdots S^1 \cdots \times 1$ where
$S^1$ is the $i$-th circle of $(S^1)^n \subset (\CC^*)^n $. Then torus action of $(\CC^*)^n$ on $L$ gives
a corresponding cycle in $L$, which we also denote as $l_i$ by abuse of notation.
For $i=1,\cdots n$, denote by $C_i \in H^1 (L)$ the Poincare dual of the cycle
$$ (-1)^{i-1}(l_1 \times \cdots \times \hat{l_i} \times \cdots l_n).$$
\end{definition}
Now, we show the existence of   good $\widetilde{B}$'s for monotone Lagrangian fibers. 
\begin{lemma}\label{lem}
Let $L$ be a monotone Lagrangian fiber in toric Fano manifold $M$.
We may choose a complex valued two form  $\widetilde{B}$ on $M$ which vanishes on $TL$ such that for all $i =1,\cdots,n$, we have
$$m_{1,2}^{\widetilde{B}}(C_i) = 0.$$
Here we set $\CL$ to be the flat complex line bundle over $L$ with trivial holonomy.
\end{lemma}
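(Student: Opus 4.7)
The plan is to reduce the vanishing of $m_{1,2}^{\widetilde{B}}$ on all of the $C_i$ to a single $\CC^n$-valued linear equation in $N$ unknowns, and then to solve that equation by a dimension count on the linear relations among the primitive normal vectors $v_j$. Concretely, from the classification of Maslov index $2$ holomorphic discs in \cite{CO} one has
\begin{equation*}
m_{1, \beta_j}(C_i) = v_j^i \cdot [L] \cdot T^{\omega(\beta_j)},
\end{equation*}
where $v_j^i$ is the $i$-th coordinate of $v_j$; the sign $(-1)^{i-1}$ in Definition \ref{generator} is precisely what makes this formula free of extra signs. Because $\CL$ is taken to be flat with trivial holonomy, the sole effect of $\widetilde{B}$ is to insert the exponential factor $\exp(\int_{\beta_j}\widetilde{B})$.

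Next I would use Lemma \ref{Bi}: the forms $B_j$ are dual to the Maslov index $2$ classes, so writing $\widetilde{B} = \sum_{j=1}^{N} c_j B_j$ for $c_j \in \CC$ gives $\int_{\beta_j}\widetilde{B} = c_j$. Monotonicity forces all $\omega(\beta_j)$ to equal a single constant $A$; setting $y_j := e^{c_j} \in \CC^{\ast}$ the lemma thus reduces to producing
\begin{equation*}
\sum_{j=1}^{N} y_j v_j = 0 \quad \text{in } \CC^n, \qquad y_j \in \CC^{\ast} \text{ for all } j.
\end{equation*}

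Finally I would check that this system is solvable. Since $P$ is a bounded $n$-polytope with $N \geq n+1$ facets, the $v_j$ span $\RR^n$ and the solution locus $R \subset \CC^N$ is a linear subspace of dimension $N - n \geq 1$. For each $j$, some vertex of $P$ must lie off facet $j$ (otherwise facet $j$ would contain all the vertices and hence be all of $P$), and smoothness of the toric Fano polytope forces the $n$ inward normals at that vertex, all distinct from $v_j$, to be a basis of $\RR^n$. Thus the $v_k$ with $k \neq j$ still span $\RR^n$, so $R \cap \{y_j = 0\}$ has dimension exactly $N - n - 1$ and is a proper subspace of $R$. The complement of $\bigcup_j \{y_j = 0\}$ in $R$ is therefore nonempty, and any point in it together with any branch of $\log$ supplies the coefficients $c_j$ and hence the desired $\widetilde{B}$.

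The main substantive input is the very first identification $m_{1,\beta_j}(C_i) = v_j^i \cdot [L] \cdot T^{\omega(\beta_j)}$, which rests on the holomorphic disc classification of \cite{CO}; after that the lemma becomes pure linear algebra, and the essential new feature is that allowing $\widetilde{B}$ to be complex-valued lets us solve the relation $\sum_j y_j v_j = 0$ over $\CC^{\ast}$ rather than over $S^1$, where it need not have any solution at all.
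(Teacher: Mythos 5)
Your proof is correct and follows essentially the same route as the paper: the same identification $m_{1,\beta_j}(C_i)=\pm v_{ji}[L]\,T^{\omega(\beta_j)}$ from the disc classification, the same use of Lemma \ref{Bi} plus monotonicity to reduce everything to finding $y_j\in\CC^*$ with $\sum_j y_j v_j=0$, and the same strategy of showing the solution space is not contained in any coordinate hyperplane. The only variation is in that last step: the paper produces, for each index, an explicit relation $v_{i_{n+1}}=\sum_j d_j v_{i_j}$ with $d_1\neq 0$ using a maximal cone and completeness of the fan, whereas you do a dimension count ($\dim(R\cap\{y_j=0\})=N-n-1<N-n=\dim R$, using smoothness at a vertex off the $j$-th facet), both arguments then concluding because a complex vector space is not a finite union of proper subspaces.
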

Assuming the lemma,  we can apply the following proposition to prove non-displaceability of monotone Lagrangian fibers
(we state the proposition in a more general setup).
\begin{prop}\label{compufloer}
Let $L$ be a positive Lagrangian torus in
a general symplectic manifold (See Definition \ref{positive}).
 Assume that for each $i$, 
$$m_{1,2}^{\widetilde{B}}(C_i) = 0,$$

Then, the Floer cohomology with $\widetilde{B}$ is isomorphic to the singular cohomology as a $\NOV$-module.
 $$HF^{\widetilde{B},*}(L,L) \cong HF^*(L;\NOV).$$
Conversely, if $m_{1,2}^{\widetilde{B}}(C_i) \neq 0$ for some $i$, 
then, Floer cohomology vanishes.
\end{prop}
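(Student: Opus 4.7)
The plan is to decompose the Bott--Morse differential by Maslov index as
$$m_1^{\widetilde{B}} = m_{1,0} + m_{1,2}^{\widetilde{B}} + m_{1,4}^{\widetilde{B}} + \cdots$$
and run a spectral sequence argument with respect to the filtration by the Novikov valuation $T^{\omega}$. The $E_1$-page of this spectral sequence is the singular cohomology $H^*(L;\NOV)\cong \Lambda^*(C_1,\dots,C_n)\otimes \NOV$, and the first potentially nontrivial differential is induced by $m_{1,2}^{\widetilde{B}}$. The task is therefore reduced to showing that vanishing of $m_{1,2}^{\widetilde{B}}$ on the generators $C_i$ forces the entire spectral sequence to degenerate.

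The key structural input I would establish is that for a positive Lagrangian torus (Definition \ref{positive}), the leading quantum operation $m_{1,2}^{\widetilde{B}}$, viewed at the level of singular cohomology, is a derivation of the exterior algebra $\Lambda^*(C_1,\dots,C_n)$. The intuition is that a Maslov 2 disc passing through a product chain $P_{i_1}\times \cdots \times P_{i_k}$ is generically incident to only one factor, and the $\AI$-relation
$$m_1\circ m_2 - m_2\circ(m_1\otimes \mathrm{id}) - m_2\circ(\mathrm{id}\otimes m_1) = \text{(higher order in } T),$$
together with the positivity hypothesis that forbids lower-energy quantum corrections to $m_2$, translates into a Leibniz rule for the leading part $m_{1,2}^{\widetilde{B}}$ on classes of degree $\ge 2$. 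Once this derivation property is in place, the hypothesis $m_{1,2}^{\widetilde{B}}(C_i)=0$ for every generator immediately extends to $m_{1,2}^{\widetilde{B}}\equiv 0$ on all of $H^*(L;\NOV)$.

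To handle the higher Maslov terms I would appeal to the classification theorem of \cite{CO}: every homotopy class of disc with Maslov index $2r$ is represented by a bubbled configuration built from Maslov 2 discs (and holomorphic spheres, which contribute trivially for degree reasons in the Fano/positive setting). Consequently, the higher page differentials of the spectral sequence are determined by $m_{1,2}^{\widetilde{B}}$ through the same derivation property, and all of them vanish once the leading term does. Hence the spectral sequence degenerates at $E_1$ and
$$HF^{\widetilde{B},*}(L,L)\cong H^*(L;\NOV).$$

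For the converse, observe that from the Maslov 2 disc classification each disc of class $\beta_j$ meets exactly one toric divisor $V(v_j)$. Feeding a 1-cocycle $C_i$ into the moduli space and evaluating at the free marked point yields a multiple of the top-degree class $[\mathrm{pt}]$; that is, $m_{1,2}^{\widetilde{B}}(C_i) = c_i\,[\mathrm{pt}]$ for some $c_i\in \NOV$. If some $c_i\ne 0$, then $[\mathrm{pt}]$ becomes a Floer boundary, and the derivation property together with Poincar\'e duality on the torus propagates the vanishing through all degrees, so $HF^{\widetilde{B},*}(L,L)=0$. The main obstacle in this plan is the derivation/Leibniz property for $m_{1,2}^{\widetilde{B}}$ on the torus cohomology and the control of higher Maslov contributions via the classification; this is precisely where the positivity assumption on $L$ is used in an essential way.
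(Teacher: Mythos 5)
There is a genuine gap in the forward direction. Your spectral sequence plan reduces everything to the claim that all higher-page differentials ``are determined by $m_{1,2}^{\widetilde{B}}$ through the derivation property'' because higher-Maslov discs bubble into Maslov~2 pieces. This is where the argument fails: first, the proposition is stated for a \emph{general} positive Lagrangian torus in a general symplectic manifold, so the classification theorem of \cite{CO} you invoke is not available; second, even in the toric case, the higher differentials depend on the chain-level operations $m_{1,\mu}$, $\mu\geq 4$, which require abstract (Kuranishi) perturbations and are precisely \emph{not} computable from the geometric bubbling picture --- this is the difficulty Section 4.2 of the paper is designed to circumvent. Vanishing of $d_1$ on generators does not by itself prevent classes from being killed on later pages (a degree-one class $C_i$ can in principle be hit by a higher differential originating from a higher-degree class), and your derivation property says nothing about that. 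The paper's actual argument avoids all of this: (i) for the degree-one generators, $m_{1,\mu}(C_i)$ vanishes for $\mu\geq 4$ by a pure dimension count (the output would be an $(n-2+\mu)$-chain in an $n$-manifold), so $m_1(C_i)=m_{1,2}^{\widetilde{B}}(C_i)=0$ on the nose; and (ii) to rule out that the $C_i$ and their products are coboundaries, it uses the Leibniz rule (\ref{leibniz}) to form the $m_1$-cycle $m_2(\cdots m_2(C_1,C_2)\cdots,C_n)$, whose energy-zero part is $\pm PD(pt)$, and observes that $PD(pt)$ can never be the energy-zero part of an $m_1$-boundary: $m_{1,\mu}$ with $\mu\geq2$ outputs positive-dimensional chains, while $m_{1,0}=\partial$ annihilates the signed count of points. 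This elementary energy-filtration argument is the missing mechanism in your proposal.

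Two smaller points. In the converse, $m_{1,2}^{\widetilde{B}}(C_i)$ is a multiple of the fundamental chain $[L]$ (the unit, i.e.\ degree zero in cohomology), not of the top-degree class $[\mathrm{pt}]$; the operation $m_{1,\beta}$ shifts cohomological degree by $1-\mu(\beta)=-1$. The paper's converse instead deduces $m_{1,2}(pt)\neq 0$ from $m_{1,2}(C_i)\neq0$ and then cites Theorem 10.1 of \cite{CO} together with the Maslov-index spectral sequence on the canonical model; your ``unit becomes a boundary'' heuristic points in the right direction but needs the same care with higher-order corrections. Finally, the derivation property of $m_{1,2}^{\widetilde{B}}$, while plausible, is never needed in the paper's route and would itself require justification at chain level.
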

The proof of this proposition will be given in the next section.
The lemma \ref{lem} and the proposition \ref{compufloer} together prove the theorem \ref{thm:mono}.
\end{proof}

Now we begin  proof of the lemma.
\begin{proof}
The expression $m_{1,2}^{\widetilde{B}}(C_i)$ is defined as
\begin{equation}\label{m12}
m_{1,2}^{\widetilde{B}}(C_i) = 
\sum_{j=1}^N [\CM_1(L,J_0,\beta_j,C_i),ev_0] \cdot exp( \int_{\beta_j} \widetilde{B}) \cdot (Hol_{\partial \beta_j} \mathcal{L}) \cdot T^{\omega(\beta_j)}.	
\end{equation}
Here, $[\CM_1(L,J,\beta_j,C_i),ev_0]$  is a chain obtained as the evaluation image of the moduli space of all $J_0$-holomorphic discs
of class $\beta_j$ which intersects $C_i$. This chain has (expected) dimension $n$, and it is in fact a cycle since
the moduli space of Maslov index two holomorphic discs are closed, which is due to the minimality of the index.
Hence it is a multiple of
fundamental class $[L]$. In fact, it can be exactly computed as (see for \cite{C} for exact signs and details)
$$ (-1)^{n} (+ v_{j1} l_1 + \cdots + v_{jn} l_n) \times 
(-1)^{i-1}(l_1 \times \cdots \times \hat{l_i} \times \cdots l_n)$$
$$ = (-1)^{n} v_{ji} (l_1 \times \cdots \times l_n) = (-1)^n v_{ji} [L]$$
As $\CL$ is a flat bundle with trivial holonomy, hence we may set 
$$Hol_{\partial \beta_j} \CL \equiv 1.$$
Monotonicity implies that the terms from symplectic area of discs $T^{\omega(\beta_j)}$ are independent of $j$, 
and we write  $\omega(\beta_j) = A_0 \in \RR$ for any $j$.
Now, we will determine $\widetilde{B}$ as a linear combination of $B_i$'s of the Lemma \ref{Bi}.
Let 
\begin{equation}
\widetilde{B} = \sum_{j=1}^N c_j B_j, \;\;\; c_j \in \CC.
\end{equation}
Then, we define $d_j \in \CC^*$
$$ d_j:= exp( \int_{\beta_j} \widetilde{B}) = e^{c_j} \in \CC^* $$

Then, the expression (\ref{m12}) is given as
$$m_{1,2}^{\widetilde{B}}(C_i)= \sum_{j=1}^N  (-1)^n v_{ji} [L] d_j T^{A_0} = (-1)^{n} \big(  \sum_{j=1}^N  v_{ji} d_j \big) T^{A_0}.$$
Hence, to prove the lemma, it suffices to choose non-zero $d_j\in \CC^*$ for $j=1,\cdots N$, satisfying  
\begin{equation}\label{vji}
\sum_{j=1}^N  v_{ji} d_j =0, \;\;\textrm{for all} \; i=1,\cdots,n.	
\end{equation}

Recall that $v_i$'s are the one dimensional generators
of the dual fan $\Sigma$ of the polytope $P$, hence we have $N > n$ and $\{v_j|j=1,\cdots, N \}$ span the vector space $\RR^n$.
Therefore the equation (\ref{vji}) clearly has a non-trivial solution by linear algebra.

Now we show that we may obtain a solution of (\ref{vji}) such that  $d_j \neq 0$ for all $j$.
To prove this claim, we show that 
the solution space of (\ref{vji}) is not contained in any coordinate plane of $\RR^N$.
To prove the latter, for a fixed index $i \in \{1,\cdots,n\}$, we find a solution $(d_1,\cdots,d_N)$ of (\ref{vji}) such that $d_i \neq 0$.

We  choose $v_{i_2},\cdots,v_{i_n}$ such that
if we set $v_i = v_{i_1}$, the collection $\{v_{i_1},\cdots,v_{i_n}\}$ defines an $n$-dimensional cone in the fan $\Sigma$. This implies
that $\{v_{i_1},\cdots,v_{i_n}\}$ generate $\RR^n$. We may take another vector $v_{i_{n+1}} \neq v_1$ for some
$1 \leq i_{n+1} \leq N$ such that $v_{i_{n+1}}$ 
does not lie on the subspace generated by $\{v_{i_2},\cdots,v_{i_n}\}$.
This is possible since $\Sigma$ is a complete fan.
Then, it is clear that since $\{v_{i_1},\cdots,v_{i_{n+1}}\}$  is not linearly independent, and we may write
\begin{equation}
	v_{i_{n+1}} = \sum_{j=1}^n d_j v_{i_j}.
\end{equation}
From the construction, we have $d_1 \neq 0$. This proves the claim that the solution space is not contained
in any coordinate plane.
This finishes the proof of the lemma.
\end{proof}

\subsection{General positive Lagrangian submanifolds}
The proof in the monotone case suggests the following generalization in the general symplectic manifold.
In short, we prove that  if there exist
an identity similar to (\ref{vji}) at each symplectic energy level (of holomorphic discs), then it can be shown to be non-displaceable.
 
First, we define 
\begin{definition}\label{positive}
Let $(M,\omega)$ be a symplectic manifold.
A Lagrangian torus $L$ is called positive if there exist a compatible almost complex structure $J$, 
such that
\begin{enumerate}
\item Maslov index of a non-trivial $J$-holomorphic disc is always positive.
\item $J$ is Fredholm regular for any $J$-holomorphic discs of Maslov index two, and
the evaluation map $ev_0:\CM_1(L,\beta) \to L$ is submersive for homotopy classes $\beta$ of
 $J$-holomorphic discs of Maslov index two.
\end{enumerate}
\end{definition}

Let $L$ be a  positive Lagrangian torus in a symplectic manifold $(M,\omega)$ with a compatible
almost complex structure $J$ as in the definition.
If we denote the homotopy classes of $J$-holomorphic discs of Maslov index two as
$\beta_1,\cdots, \beta_N$ and consider
$$\{\omega(\beta_1),\cdots,\omega(\beta_N)\} = \{\lambda_1,\cdots,\lambda_k \}$$
where energy levels, $\lambda_i \in \RR$'s are set to be distinct from each other.
Using the map 
$$\partial : \pi_2(M,L) \to \pi_1(L) \to H_1(L) \cong \ZZ^n,$$
one can define $v_j = \partial \beta_j \in \ZZ^n$ for $j=1,\cdots,N$.

We define the set of indices with energy $\lambda_i$ as
$$I_{\lambda_i} = \{ l \;| \; \omega(\beta_l) = \lambda_i \} \subset \{1,2,\cdots,n\}.$$
We list the elements of $I_{\lambda_i}$ as
$$I_{\lambda_i} = \{ l_{i1},\cdots,l_{ia_i} \}$$

\begin{theorem}\label{generalthm}
Let $(M,\omega)$, $L$, $J$, $\beta_*$, $\lambda_*$ as above. We assume
\begin{enumerate}
\item $L$ be a positive Lagrangian torus.
\item For each energy level $\lambda_i$, there exists $c_{l_{i1}},\cdots,c_{l_{ia_i}} \in \CC^*$ such that	
$$ \sum_{j=1}^{a_i} c_{l_{ij}} v_{l_{ij}} = 0.$$
\item There exist a set of complex-valued two forms on $M$, $\{B_i\}$ for $i=1,\cdots,N$, which
vanishes on $TL$ and satisfies 
\begin{equation}\label{bij}
exp\; (\int_{\beta_j} B_i) = \begin{cases} 1 & \;\;\; \textrm{if}\;\;\; i\neq j \\ c_i & \;\;\;\textrm{if} \;\;\; i=j \end{cases}
\end{equation}
\end{enumerate}
Then, there exist a closed complex valued two form  $\widetilde{B}$ such that Floer cohomology
with $\widetilde{B}$ is non-vanishing, and in fact, isomorphic to 
the singular cohomology $H^*(L,\Lambda_{nov})$ as a module.
\end{theorem}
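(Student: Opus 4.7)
The strategy is to mimic the monotone proof of Theorem~\ref{thm:mono}, but to cancel the Maslov index~$2$ contributions one symplectic energy level at a time. First I would set
\[
\widetilde{B}\;:=\;\sum_{j=1}^N B_j,
\]
where the $B_j$ are the closed complex-valued two-forms supplied by hypothesis~(3). Each $B_j$ vanishes on $TL$, so $\widetilde B$ does as well, and by multiplicativity of $\exp$ together with (\ref{bij}),
\[
\exp\!\Bigl(\int_{\beta_j}\widetilde{B}\Bigr)
=\prod_{i=1}^N\exp\!\Bigl(\int_{\beta_j}B_i\Bigr)
=c_j,\qquad j=1,\ldots,N.
\]
I would pair this $\widetilde B$ with the trivial flat line bundle $\CL$ on $L$, so that $Hol_{\partial\beta_j}\CL=1$. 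Invoking Proposition~\ref{compufloer}, the theorem reduces to verifying $m_{1,2}^{\widetilde{B}}(C_i)=0$ for a basis $C_1,\ldots,C_n$ of $H^1(L)\cong\ZZ^n$, dual to the $\ZZ$-basis of $H_1(L)$ in which the components $v_{ji}$ are computed.

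Second, I would extend the chain-level computation of \cite{C,CO} to the present positive setting. Because $\mu(\beta_j)=2$ is the minimal Maslov index, $\CM_1(L,J,\beta_j)$ has no codimension-one boundary; and by the submersion assumption of Definition~\ref{positive}, $ev_0:\CM_1(L,J,\beta_j)\to L\cong T^n$ is a proper submersion between manifolds of dimension $n$. Geometrically, as a boundary marked point travels once around a disc of class $\beta_j$, its image in $L$ traces the loop $\partial\beta_j=v_j\in H_1(L)$, so the component of the evaluation chain along the direction dual to $C_i$ is governed by $v_{ji}$. The expected identity (matching the toric computation in \cite{C,CO}) is
\[
[\CM_1(L,J,\beta_j,C_i),ev_0] \;=\; \epsilon\, v_{ji}\,[L],
\]
where $\epsilon$ is a universal sign independent of $j$ and $i$.

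Third, substituting and sorting by symplectic area yields
\[
m_{1,2}^{\widetilde{B}}(C_i)
\;=\;\epsilon\,[L]\sum_{k=1}^{k_0} T^{\lambda_k}\sum_{j\in I_{\lambda_k}} v_{ji}\, c_j.
\]
Since the monomials $T^{\lambda_k}$ for distinct $\lambda_k$ are $\NOV$-linearly independent, this vanishes for every $i$ if and only if $\sum_{j\in I_{\lambda_k}} c_j\,v_j=0$ in $\CC^n$ for each $k$, which is exactly hypothesis~(2). Thus $m_{1,2}^{\widetilde B}(C_i)=0$ for all $i$, and Proposition~\ref{compufloer} delivers the module isomorphism $HF^{\widetilde B,*}(L,L)\cong H^*(L;\NOV)$, which is in particular non-vanishing.

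The main obstacle is the chain-level identification in the second step: in the toric case one has an explicit classification of Maslov~$2$ discs, but for a general positive Lagrangian one must work from the submersion property alone. The key point to verify is that the evaluation cycle is genuinely proportional to $v_{ji}[L]$ with a constant of proportionality that is uniform in $j$ (not merely $j$-dependent). If it turned out to depend on $j$ via some factor $\kappa_j$, hypothesis~(2) would need to be strengthened to $\sum_{j\in I_{\lambda_k}} \kappa_j c_j v_j=0$; however, one expects that the submersion hypothesis, combined with a careful orientation and degree analysis and the fact that the boundary class $v_j$ controls the ``direction'' of the evaluation chain, yields a uniform constant, so that the theorem holds as stated.
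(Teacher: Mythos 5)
Your proposal is correct and follows essentially the same route as the paper, whose own proof is literally the one-line remark that one repeats the monotone argument (choose $\widetilde{B}=\sum_j B_j$ so that $\exp\int_{\beta_j}\widetilde{B}=c_j$, sort $m_{1,2}^{\widetilde{B}}(C_i)$ by the energy levels $T^{\lambda_k}$, cancel within each level using hypothesis (2), and invoke Proposition \ref{compufloer}). The multiplicity issue you flag in your last paragraph is real but harmless: even if the evaluation cycle for $\beta_j$ carries a nonzero $j$-dependent degree $\kappa_j$, replacing $c_j$ by $c_j/\kappa_j$ (still in $\CC^*$) restores the cancellation, so the hypotheses as stated suffice.
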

\begin{remark}
In case $c_i =1$, then $B_i$ can be chosen as zero.
\end{remark}
\begin{proof}
The proof is exactly the same as monotone case, but the difference is that
we apply the same argument for each energy level to achieve the vanishing of $m_{1,2}^{\widetilde{B}}(C_*)$.
\end{proof}
In the last section, we provide some examples of non-monotone Lagrangian torus fibers which is shown to be non-displaceable
as an application.
\section{Computation of Floer cohomology}
This section consists of two parts. In the first part, we give the proof of Proposition \ref{compufloer}, which claims
that in certain cases, we can determine the Floer cohomology of Lagrangian tori from the Maslov index two boundary map computation on the codimension one generators. In the second part, we explain the technical problem of transversality on the computation of Floer cohomology of
Lagrangian tori on the general toric Fano manifolds from \cite{CO}, and explain how to overcome the problem with the idea  of the first part.
We learned that similar idea to that of the first part has been used by Biran-Cornea in a different setting of
Lagrangian Floer cohomology in \cite{BC}.

\subsection{Computation from Maslov index two contribution.}
We begin the proof of the Proposition \ref{compufloer}. In the case of $\widetilde{B}=0$, this will be used to  to remove the convexity assumption in \cite{CO}.

Recall that we have an $\AI$-algebra of Lagrangian submanifolds by the construction of \cite{FOOO}.
In our case, we assume the positivity of Maslov index for non-constant holomorphic discs and
this implies that $m_0$ is a multiple of the fundamental cycle $[L]$. Hence, in the language of
\cite{FOOO}, it is weakly obstructed, and we may set the bounding cochain to be $B=0$. We also
assume that the $\AI$-algebra of \cite{FOOO} is unital.
(We may need to take the canonical model if necessary(\cite{FOOO}) for this. But
in this positive case, the $m_{1,2}^{\widetilde{B}}(C_i)$ part can be made to be the same after
such process due to dimensional restriction)

In Proposition \ref{compufloer}, we have assumed that $m_{1,2}^{\widetilde{B}}(C_i) =0$. (From now on
we omit $\widetilde{B}$ from the notation for simplicity). The above condition actually implies that 
$$m_1(C_i) =0.$$
From the dimension calculation, $m_{1,\mu}(C_i)$ is expected to be $n-2+\mu$ dimensional chain ($n = dim(L)$),
hence for $\mu \geq 4$, $m_{1,\mu}(C_i)$ maybe regarded as zero.

Now, it remains to show that $C_i$'s are {\it not} $m_1$-coboundaries.
(It may become coboundary  because the dimension of $m_{1,\mu}(P)$ equals $n-1 = dim(C_i)$ if  $\mu = deg(P)-1$.)
But we can show that $C_i$'s are not $m_1$ coboundaries by the following simple argument.

By the Leibniz rule from the $\AI$-algebra equations, we have
\begin{equation}\label{leibniz}
m_1(m_2(x,y)) + m_2(m_1(x),y) +(-1)^{|x|+1} m_2(x,m_1(y))=0.  
\end{equation}
If $x,y$ are $m_1$-cycles, then
$m_2(x,y)$ is also a $m_1$-cycle. Recall that its energy zero part is $m_{2,\beta_0}(x,y)$ which is given by
cup product of ( or intersection product of chains)  $x$ and $y$ (here $\beta_0$ is the homotopy class of constant discs). And if $x$ or $y$ is a $m_1$-coboundary, then
$m_2(x,y)$ is also a $m_1$-coboundary. 

For the torus, we have chosen $C_i$'s so that the following intersection is transversal and
equal to
 $$(\cdots(PD(C_1) \cap PD(C_2)) \cap \cdots )\cap PD(C_n)) = \pm pt \in T^n$$
Hence, we may define $m_{2,0}(C_i,C_j)$ so that 
\begin{equation}\label{pteq}
\pm PD(pt)  = m_{2,0}(m_{2,0}(\cdots(m_{2,0}(C_{1},C_{2}), C_{3}),\cdots), C_{n})
\end{equation}
Therefore, we may write 
\begin{equation}\label{ccc}
m_{2}(m_{2}(\cdots(m_{2}(C_{1},C_{2}), C_{3}),\cdots), C_{n}) =  \pm PD(pt) \pm  \sum_{k} a_k P_k T^{\lambda_k},
\end{equation}
where $a_k \in \QQ$, and $\lambda_k >0$ and $P_k$'s are chains of positive dimensions in $L$.
These $P_k$'s are the result of sucessive operations $m_{2,\beta}$'s and easy computation shows that they are of
positive dimensions except (\ref{pteq}). We remark that in \cite{C2} we observed that  $PD(pt)$ is
a singular cohomology cycle, but not a $m_1$-cycle in general. And
one needs additional chains such as $P_k$ to get $m_1$-cycle in the chain level. 

Notice that the expression (\ref{ccc}) is an $m_1$-cycle since we assumed that each $C_i$'s are $m_1$-cycles. 
We claim that
the expression (\ref{ccc}) cannot be a $m_1$-coboundary
since its energy zero part is $PD(pt)$.

Suppose the expression (\ref{ccc}) equals $m_1( \sum_j Q_j T^{\sigma_j})$ for some $Q_j,\sigma_j$'s.
First, note that $m_{1,\beta}(Q_j)$ is of positive dimension if $\mu(\beta)\geq 2$ since
$$dim(Q_j) + \mu(\beta) -1 > 0 .$$
 Hence, to have $PD(pt)$ as a boundary image, 
it should be of the form $m_{1,0}(\sum_{j'} Q_{j'})$ for some $Q_{j'}$'s, but clearly this cannot become a $(pt)$, since
the (signed) sum of the coefficients of the zero dimensional chain  in $m_{1,0}(\sum_{j'} Q_{j'})$ vanishes,
since $m_{1,0}$ is the standard boundary operation $\partial$.

Hence, this proves that (\ref{ccc}) is a non-trivial $m_1$-homology cycle.
Therefore, we conclude that all the intermediate products $m_2(\cdots,m_2(C_1,C_2),\cdots,C_i)$ as well
as all $C_i$'s  are in fact also non-trivial in Floer cohomology.
This proves the main statement of the Proposition.

To prove the converse statement, suppose that $m_{1,2}(C_i) = c_i [L] T^{Area}$ for some
$c_i \neq 0$. This in fact implies that $m_{1,2}(pt) \neq 0$: If we write
$$m_{1,2}(pt) = \sum_{j=1}^n a_j[l_j],$$
then it is not hard to check that $a_i = c_i$ from above.
This implies that $m_{1,2}(pt) \neq 0$.
Now, recall the theorem 10.1 of \cite{CO}, which shows the
equivalence of $m_{1,2}(pt) \neq 0$ and the vanishing of Floer cohomology.
Since the image of Maslov index two is a multiple of fundamental cycle, 
the computation here is very similar to that of \cite{CO} and the only
difference is the fact that in that case, we have used the fact that
$m_{1,\mu} \equiv 0$ for $\mu \geq 4$, which we do not assume here.
But we can work with a spectral sequence comming from the Maslov index (\cite{Oh3}, \cite{FOOO}) after
taking the canonical model,
then the same argument as in \cite{CO} proves the vanishing of Floer cohomology.
Hence the converse statement holds in our case, and this proves the proposition.

\subsection{The case of torus fibers in the general toric Fano manifolds}
We first describe the problem we have encountered in \cite{CO}, and 
we explain how to resolve it. 
Recall that for toric Fano manifolds,
we have classified all holomorphic discs with boundary on Lagrangian torus fibers
generalizing the results of \cite{C}. By proving the Fredholm regularity of the standard
complex structure $J_0$ for all the holomorphic discs, and computing symplectic areas of holomorphic discs, we have
explicitly computed Floer differential $m_{1,\beta}$. The only problem was that although $J_0$ is Fredholm regular for all holomorphic discs,
it is {\it not} Fredholm regular for holomorphic spheres for general toric Fano manifolds.
It means that for the holomorphic spheres, 
the actual dimensions of the moduli spaces may be bigger than the expected (virtual) dimensions of them due to this non-regularity.

Hence, the moduli space of holomorphic discs 
of Maslov index $\geq 4$  
may not have a good compactification since it may bubble off a holomorphic sphere, and in this case
the boundary strata may have bigger dimension than the main stratum.
So we have restrict ourselves to the case of,  so called,  convex symplectic manifolds in \cite{CO}, which
guarantees the regularity of $J_0$-holomorphic spheres.

This non-regularity problem can be resolved if we make (abstract Kuranishi) perturbations of the moduli spaces,
then the perturbed moduli space  is almost impossible to use for computation directly.
Recall that without perturbation, the image of holomorphic discs of Maslov index $\geq 4$ did not have
any non-trivial contribution. More precisely, in Proposition 7.2 \cite{CO}, we have proved that $m_\mu \equiv 0$ for $\mu \geq 4$. This is because 
although the moduli spaces of holomorphic discs have correct dimensions, their evaluation images with
one marked point is of smaller dimension than expected, and it was regarded as zero. In this way, the Floer cohomology  depended only on $m_{1,2}$.
But as soon as we introduce abstract perturbations to resolve the sphere bubble issues, this argument is no longer true  since the images of the virtual chains after abstract perturbations will have images of expected dimension in general.
Hence, their contribution may not vanish. 

Now, the arguments in the last subsection tells us how to overcome this problem, since  Floer cohomology can be determined by $m_{1,2}(C_i)$ only. In our case, since the moduli space of holomorphic discs of Maslov index two
are always well-defined without perturbation, we will use unperturbed moduli spaces for the index two case and
we can make explicit computations on $m_{1,2}(C_i)$. Now, we can introduce an abstract perturbation for $\mu \geq 4$
if necessary. Even though $m_{1,\mu}$ for $\mu \geq 4$ is not computable, the arguments in the previous subsection
can be used to determine Floer cohomology completely. Now it is clear that
the results of \cite{CO} holds for all toric Fano manifolds from the arguments in the first part,
together with the machinery of \cite{FOOO}.
\begin{prop}
The theorems in \cite{CO} also holds true for all toric Fano  manifolds
\end{prop}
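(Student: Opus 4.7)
The plan is to combine Proposition \ref{compufloer} with the Fukaya--Oh--Ohta--Ono Kuranishi machinery: once we know that Floer cohomology is detected by $m_{1,2}$ on the generators $C_i$, the only thing we must control on a general (not necessarily convex) toric Fano manifold is the Maslov-index-two sector, where the transversality obstruction does not appear. The non-index-two operations may freely be defined via abstract perturbation, since their explicit values are no longer needed for the conclusion.

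First I would recall the classification from \cite{CO}: on any toric Fano $(M,\omega)$, every $J_0$-holomorphic disc with boundary on a torus fiber $L$ is enumerated, $J_0$ is Fredholm regular along each such disc, and symplectic areas are explicit. In particular, $\CM_1(L,J_0,\beta_j)$ for $\mu(\beta_j)=2$ is compact and of expected dimension, because Maslov index two is minimal, so no disc or sphere bubbling can drop the energy: any potential sphere bubble would have to detach from a disc of Maslov index $\leq 0$, which does not exist. Hence the explicit evaluation chain computation of $m_{1,2}(C_i)$ carried out in \cite{CO}, Theorem 6.1 and Theorem 10.1 of \cite{CO}, is valid verbatim on \emph{every} toric Fano manifold, with no perturbation and no convexity hypothesis.

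Next I would invoke the machinery of \cite{FOOO} to construct a unital filtered $\AI$-algebra on $L$ using Kuranishi structures and abstract multisections, perturbing only those moduli spaces $\CM_{k+1}(L,\beta)$ with $\mu(\beta)\geq 4$ that fail to be transversely cut out. The key compatibility point is that the FOOO construction allows one to keep moduli components which are already transversely cut out in their unperturbed form, and to perturb only the troublesome components, while still producing the $\AI$-relations. Consequently the perturbed $\AI$-algebra agrees with the \cite{CO} computation at the Maslov-two level, even though the operations $m_{1,\mu}$ for $\mu \geq 4$ may now contribute nontrivially after perturbation (and are in general no longer computable).

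Finally I would apply Proposition \ref{compufloer}: the non-vanishing (or vanishing) of $HF(L,L)$ is governed entirely by whether $m_{1,2}(C_i)=0$ for every $i=1,\dots,n$. Since that expression is determined by the unperturbed Maslov-two data and matches the $m_{1,2}$ used in \cite{CO}, the characterization of Lagrangian torus fibers with non-vanishing Floer cohomology (balanced fibers) and the resulting isomorphism with $H^*(L;\NOV)$ carry over without change. The main obstacle in this proof is precisely the compatibility step just described, namely verifying that the Kuranishi perturbations needed for $\mu \geq 4$ can be chosen to leave the Maslov-two moduli untouched while preserving the $\AI$-relations; this is handled by the flexibility built into the perturbation schemes of \cite{FOOO}, and once granted, the proposition follows from the arguments of the previous subsection together with the computations of \cite{CO}.
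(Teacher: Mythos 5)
Your proposal is correct and follows essentially the same route as the paper: use the unperturbed, explicitly computed Maslov-index-two moduli spaces (which are compact by minimality of the index and Fredholm regular for $J_0$), introduce abstract Kuranishi perturbations only for $\mu \geq 4$ where sphere bubbling spoils regularity, and then conclude via Proposition \ref{compufloer} that the Floer cohomology is determined entirely by $m_{1,2}(C_i)$. The compatibility point you flag (perturbing only the $\mu\geq 4$ strata while keeping the $\AI$-relations) is exactly the step the paper delegates to the machinery of \cite{FOOO}.
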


Now the interesting open question is what happens in the non-Fano case. The argument itself works
fine even in non-Fano case, but the problem is that the moduli space of holomorphic discs of Maslov index
two also needs to be perturbed, since there might be a sphere bubble with non-positive Chern number from
the Maslov index two disc. This might lead us to interesting questions regarding wall-crossings and
multi-valuedness of $m_0$ as in \cite{A}.

\section{More Examples}
In the first subsection, we consider the  Hirzebruch surface $F_1$ which 
illustrates very well the subtle differences between various versions of Lagrangian Floer cohomology 
considered in this paper. Later, we  discuss non-monotone examples.
\subsection{Hirzebruch surface}
Recall that $F_1$ is obtained from $\CP^2$ by blowing up a fixed point of the torus action.
Symplectic form can be given by specifying its moment polytope, which we define as the region in $\RR^2$ 
(See Figure 1 (a)) bounded by
\begin{equation}
	x=-1,\; y=-1,\; x+y=-1,\; x+y=1.
\end{equation}
First, recall that there are four homotopy classes of holomorphic discs of Maslov index two, 
$\beta_i$ corresponding to each normal vector $v_i$ for $i=1,2,3,4$. Also recall
the following area formula of holomorphic discs from \cite{CO}.
\begin{theorem}(Theorem 8.1 \cite{CO})
Let $P$ be a polytope defining a toric Fano manifold $M$, which is defined as 
$$\{x \in \RR^n| <x,v_j > \geq \lambda_j \}.$$ Let $A = (\theta_1,\cdots,\theta_n) \in \RR^n$.
Then the area of holomorphic disc $D(v_j)$ of homotopy class $\beta_j$ whose boundary lies in the Lagrangian submanifold $\mu_P^{-1}(A)$ is
$$2\pi(<A,v_j> - \lambda_j).$$
\end{theorem}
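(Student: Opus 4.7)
The plan is to compute $\omega(\beta_j) = \int_{D^2}\phi^*\omega$ by applying Stokes' theorem to the standard action--angle primitive of $\omega$ on the open torus orbit, with a careful treatment of the single interior puncture coming from the intersection of $D(v_j)$ with the toric divisor $V(v_j)$. I would first invoke the classification result from \cite{CO} (Theorem 5.2): any holomorphic representative $\phi$ of $\beta_j$ meets $V(v_j)$ transversally in a single interior point $p$, and meets no other toric divisor $V(v_i)$ with $i\neq j$. Consequently $\phi^{-1}\bigl(\bigcup_i V(v_i)\bigr)$ consists of the single interior point $z_0=\phi^{-1}(p)\in D^2$, and $\phi|_{D^2\setminus\{z_0\}}$ lands in the open dense torus orbit.

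On that orbit one has global action--angle coordinates $(x_1,\ldots,x_n,\theta_1,\ldots,\theta_n)$ with $\mu_P(x,\theta)=x$ and $\omega=\sum_i dx_i\wedge d\theta_i = d\alpha$ for $\alpha = \sum_i x_i\, d\theta_i$. Excising a small coordinate disc $U_\epsilon\subset D^2$ centered at $z_0$ and applying Stokes to the resulting annular region,
\begin{equation*}
\int_{D^2\setminus U_\epsilon}\phi^*\omega \;=\; \int_{\partial D^2}\phi^*\alpha \;-\; \int_{\partial U_\epsilon}\phi^*\alpha.
\end{equation*}
On the outer boundary $\partial D^2\subset L(A)$ we have $x_i\equiv A_i$; since $\partial\beta_j = v_j$ in $\pi_1(L(A))\cong\ZZ^n$ under the framing $l_1,\ldots,l_n$ of Definition \ref{generator}, the coordinate $\theta_i$ winds exactly $v_{j,i}$ times along $\partial D^2$, yielding $\int_{\partial D^2}\phi^*\alpha = 2\pi\langle A,v_j\rangle$.

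For the inner integral I would use that in a local holomorphic coordinate $w$ normal to $V(v_j)$ at $p$, transversality forces $\phi$ to take the form $w = c(z-z_0)+O((z-z_0)^2)$, so as $\epsilon\to 0$ the image $\phi(\partial U_\epsilon)$ degenerates into a small loop around $p$ in the $w$-direction. This loop is the link of the $S^1$-orbit generated by $v_j$ at the fixed point $p$, so it winds $v_{j,i}$ times in each coordinate $\theta_i$; combined with $x_i\to\mu_P(p)_i$ and $\langle\mu_P(p),v_j\rangle = \lambda_j$ (the defining equation of the facet containing $p$), we obtain $\int_{\partial U_\epsilon}\phi^*\alpha\to 2\pi\lambda_j$. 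Subtracting the two limits gives $\omega(\beta_j)=2\pi(\langle A,v_j\rangle-\lambda_j)$.

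The main obstacle is the careful analysis of the inner boundary limit: although $\alpha = \sum x_i\,d\theta_i$ is singular along $V(v_j)$, its integral around the shrinking loop must be shown to converge to precisely $2\pi\lambda_j$. This reflects the standard toric fact that at a codimension-one $T^n$-stratum the primitive isotropy subgroup is generated by $v_j$ and has moment-map component $\langle\mu_P,v_j\rangle$ normalized to equal $\lambda_j$ on the stratum; once this local model and the transversality in the classification of \cite{CO} are in hand, the rest is bookkeeping in the identification of the winding vector $\partial\beta_j$ with $v_j$.
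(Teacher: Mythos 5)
Your argument is correct, but it is not the route taken in \cite{CO} (the present paper only quotes Theorem 8.1; the proof lives in \cite{CO}, Section 8). There the area is obtained from the \emph{explicit} holomorphic representatives of $\beta_j$ produced by the classification theorem: the disc is written in the homogeneous coordinates of the Delzant quotient $\CC^N /\!/ T^{N-n}$, where it lifts to a standard flat disc in a single coordinate plane of $\CC^N$, and the area is then read off from the ambient moment map identity $|z_j|^2/2=\langle \mu_P, v_j\rangle-\lambda_j$, giving $\pi|z_j|^2=2\pi(\langle A,v_j\rangle-\lambda_j)$. Your proof instead applies Stokes' theorem to the action--angle primitive $\alpha=\sum x_i\,d\theta_i$ on the punctured disc, and uses only homological/intersection data: $\partial\beta_j=v_j$ in $H_1(L)\cong\ZZ^n$ and the single transverse intersection with $V(v_j)$. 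What this buys is independence from the explicit parametrization --- the computation applies to \emph{any} representative of $\beta_j$ meeting only $V(v_j)$, and makes the homotopy invariance of the answer manifest --- at the cost of the boundary analysis at the puncture, which you correctly identify as the delicate point. That step does close: after an $SL(n,\ZZ)$ change of basis making $v_j=e_1$, the local model near a generic point of $V(v_j)$ is $\CC\times(\CC^*)^{n-1}$ with $x_1-\lambda_j=|w|^2/2$, so $\phi^*\alpha$ restricted to $\partial U_\epsilon$ is $\lambda_j\,d\theta_1$ plus terms of size $O(\epsilon)$, and since the winding of $\theta_1$ around the shrinking loop is $+1$ (holomorphy of $\phi$ plus transversality), the inner integral converges to $2\pi\lambda_j$; consistency of the inner and outer windings is automatic because $\phi$ maps the annulus into the open orbit. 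Two cosmetic cautions: the point $p$ is a generic point of the divisor, not a torus fixed point, and you are overloading $\theta_i$ as both the components of $A$ and the angle coordinates. Neither affects correctness.
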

Hence, one can easily notice that for the $\mu_P^{-1}((0,0))$, the areas of $\beta_1,\beta_2,\beta_3,\beta_4$
all equals $2\pi$ from the formula. The fiber $\mu_P^{-1}((0,0))$ is in fact a monotone Lagrangian submanifold.

Now, one can explicitly compute the Floer cohomology of $\mu_P^{-1}((\theta_1,\theta_2))$  for any 
$(\theta_1,\theta_2) \in P$ as follows.
Let $l_1,l_2$ be two generators of $H_1(T^2)$, and 
without considering $\widetilde{B}$, we have (See \cite{CO} for details) 
\begin{equation} \label{hireq1}
m_1(l_1) = -T^{2\pi(1-\theta_1-\theta_2)} e^{-h_1i-h_2i} + T^{2\pi(1+\theta_1+\theta_2)} e^{h_1i+h_2i} + T^{2\pi(1 + \theta_2)} e^{h_2i}
\end{equation}
\begin{equation}\label{hireq2}
m_1(l_2) = -T^{2\pi(1-\theta_1-\theta_2)} e^{-h_1i-h_2i} + T^{2\pi(1+\theta_1+\theta_2)} e^{h_1i+h_2i} + T^{2\pi(1 + \theta_1)} e^{h_1i}	
\end{equation}
To have non-vanishing Floer cohomology, we should have $m_1(l_1)=m_1(l_2)=0$ from
the Proposition \ref{compufloer}.
Since  two expressions (\ref{hireq1}),(\ref{hireq2}) differ by the last term, one
can easily see that 
\begin{equation}\label{fleq}
\theta_1 = \theta_2=\theta, \;\;  h_1=h_2=h.
\end{equation}
Now, we will discuss three versions of Lagrangian Floer homology separately.
\subsection{The standard Floer cohomology}
As there are three terms to be canceled, it is necessary that all the terms $T^{area}$ should
have equal area, which implies $\theta_1 = \theta_2 = 0$.
Hence we need to solve the equation. 
$$ e^{4hi} + e^{3hi} +1 =0.$$
but this equation does not have a solution for $h \in \RR$ (See (\ref{convnonv})).
Hence, the standard Floer cohomologies of all Lagrangian torus fibers vanish in this case.

\subsection{Convergent version of Floer cohomology}
We first explain the computation in the convergent case and explain its implications afterward.
We recall that in the convergent version of Floer cohomology which appears in Mirror symmetry, 
one substitute a formal parameter $T$ in the Novikov ring with a specific complex value.
In general, the operations $m_*$ is an infinite sum over all quantum contributions where infinite sum 
was justified by  using  the Novikov ring coefficients.
After the substitution,  there is a convergence issue of the infinite sum.
But with the positivity  assumptions, it is only a finite sum from the dimension argument.
Hence in this case there is no convergence problem.

In \cite{CO}, the substitution $T^{2\pi}=e^{-1}$ was used to identify $m_0$ and the Landau-Ginzburg superpotential.
Hence, we will consider the case of the substitution $T^{2\pi}=e^{-1}$. 
Note that in the convergent version, $T$ is  of explicit value, 
hence  the terms with different exponents  could add up.
Then, we set 
$$z=e^{-\theta +ih}.$$
Then, vanishing of (\ref{hireq1}),(\ref{hireq2}), with (\ref{fleq}) is equivalent to
\begin{equation}
z^4 + z^3 -1 =0.
\end{equation}
This equation has four solutions which are approximately (obtained from Matlab).
\begin{equation}\label{convnonv}
 e^{-0.0614 \pm 1.8063 i}, e^{0.1995}, e^{0.3223 + \pi i}.
\end{equation}
Note that none of them corresponds to the origin where $|z| =1$.
But, in \cite{EP}, it was observed that all the fibers other than $\mu_P^{-1}((0,0))$ are
displaceable by some Hamiltonian isotopy. Hence, the fibers corresponding to
(\ref{convnonv}) have non-vanishing Bott-Morse Floer cohomology $HF(L,L)$, but
is displaceable by some Hamiltonian isotopy. This implies that 
for the convergent version of Floer cohomology,
we have $$HF(L,L) \neq HF(L,\psi(L)).$$

This shows that the convergent version does not have the same property as the standard version
at least in the Morse-Bott case. This is in part because the presence of formal parameter $T$ was crucially used in proving various isomorphisms and  the symplectic
invariance property of Floer cohomology. 
But mirror symmetry correspondence seems to be related to the convergent version of 
Floer cohomology.

\subsection{With a complex valued closed two from $\widetilde{B}$}

Now, we show that we may choose a good $\widetilde{B}$ which gives rise to a non-vanishing Floer cohomology
for the monotone Lagrangian fiber $\mu_P^{-1}((0,0))$.
Let 
$$v_1 = (-1,-1), v_2=(1,0), v_3=(1,1), v_4 = (0,1).$$
Then, we can find a solution $(c_1,c_2,c_3,c_4)$ with $c_i \in \CC^*$ satisfying
$$\sum_{i=1}^4 c_i v_i =0.$$
In fact, (\ref{hireq1}),(\ref{hireq2}) becomes (with trivial holonomy for simplicity)
\begin{equation} \label{hireq11}
m_1(l_1) = \big(((-l_1-l_2) \times l_1)e^{\int_{\beta_1} \widetilde{B}}
+ ((l_1 +l_2) \times l_1)e^{\int_{\beta_3} \widetilde{B}}
+ (l_2 \times l_1)e^{\int_{\beta_4} \widetilde{B}} \big) T^{2\pi}
\end{equation}
$$ = \big( e^{\int_{\beta_1} \widetilde{B}} - e^{\int_{\beta_3} \widetilde{B}}- e^{\int_{\beta_4} \widetilde{B}} \big) T^{2\pi} = (c_1 - c_3 - c_4) T^{2\pi}$$
\begin{equation} \label{hireq21}
m_1(l_2) = \big(((-l_1-l_2) \times l_2)e^{\int_{\beta_1} \widetilde{B}}
+ (l_1 \times l_2)e^{\int_{\beta_2} \widetilde{B}} 
+ ((l_1 +l_2) \times l_2)e^{\int_{\beta_3} \widetilde{B}}\big) T^{2\pi}
\end{equation}
$$ = \big( - e^{\int_{\beta_1} \widetilde{B}} + e^{\int_{\beta_2} \widetilde{B}} +e^{\int_{\beta_4} \widetilde{B}} \big) T^{2\pi} = (-c_1 + c_2 + c_3) T^{2\pi}.$$

Such $c_i$'s are easy to find, for example, we may take 
$$c_1 = 2, c_2=c_3=c_4=1.$$
Hence $\widetilde{B}$ may be taken as a suitable constant multiple (so that $c_1=2$) of the Poincare dual of
the divisor $D(v_1)$ whose support is sufficiently close to $D(v_1)$. Then, the resulting
Floer cohomology with $\widetilde{B}$ is non-vanishing, hence proves the non-displaceability of this
monotone fiber. 
In this particular example, another way to prove this is to set $\widetilde{B}=0$ and use non-unitary flat connections to obtain the same results.
\begin{figure}[ht]
\begin{center}
\includegraphics[height=1.5in]{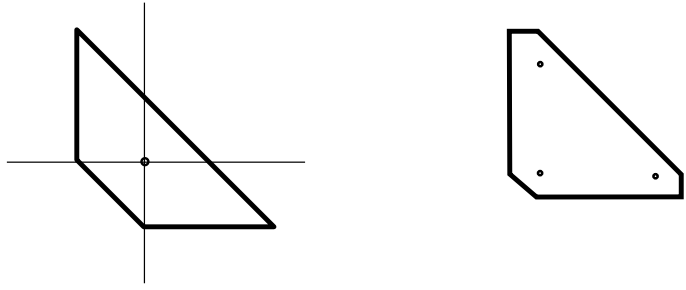}
\caption{(a) Hirzbruch surface, \hspace{1cm} (b) Three point blow up of $\CP^2$ }
\label{fig3}
\end{center}
\end{figure}

\subsection{Non-monotone examples}
We provide some non-monotone Lagrangian submanifolds which are non-displaceable by Hamiltonian isotopy in this subsection, which were not known previously.

The first example is the case of $\CP^2$ blown up at three fixed points of the torus action
(in the same way at each three points) (See Figure 1(b) ). 
In fact, there are three non-displaceable fibers as in the figure. 

We may define the polytope as a  region bounded by the lines
$$ x+ y =1,\; x=0,\; y=0,\;  x= 1-\epsilon,\;  y= 1-\epsilon,\;  x+y = \epsilon.$$
Then, we can check that 
the fibers corresponding to 
$$(\epsilon,\epsilon), \;\; (\epsilon, 1-2\epsilon),\;\; (1-2\epsilon, 1-2\epsilon)$$
are non-displaceable by the theorem \ref{generalthm}.
Let us denote the normal vectors as $v_1,\cdots,v_6$ in this case counting
counter clockwise starting from $v_1 = (-1,-1)$.

One may notice that there are two energy levels, $\lambda_1 = 2\pi \epsilon, \lambda_2= 2\pi(1-2\epsilon)$ of
holomorphic discs of Maslov index two, and
and at the energy level $2\pi\epsilon$, there are three normal vectors
$v_3,v_4,v_5$ with $$v_3 + e^{\pi i} v_4 + v_5 =0,$$ and at the energy level $2\pi(1-2\epsilon)$,
there are three normal vectors $v_1,v_2, v_6$ with
$$ e^{\pi i} v_1 + v_2 + v_6 =0.$$
Hence the non-displaceability follows from the theorem. 

There is a higher dimensional analogue of this example. Namely for $\CP^n$, one can consider
the blow up at $(n+1)$ fixed points of the torus action. Then, there will be $(n+1)$ non-displaceable
fibers as in the above example near each blow up, whose non-displaceability can be proved analogously.

As dimension goes up, it gets easier to find more examples, since there are more normal vectors to play with.
Another example may be obtained by blowing up $\CP^1 \times \CP^1 \times \CP^1$ at the
two fixed points where one is located at the other end of the diagonal from the other.
More precisely, one can set the moment polytope to be the cube with verticies $(\pm 1,\pm 1, \pm 1)$, and the normal vectors corresponding to the blow ups to be $(1,1,1),(-1,-1,-1)$. Then, if the corresponding facets are
given by $x+y+z =1$ and $x+y+z=-1$, then one gets a reflexive polytope.
Now, to get non-monotone examples, one moves four facets (out of eight) together. Namely
\begin{enumerate}
\item Consider $x+y+z =1-\epsilon$, together with $x=y=z=-1+\epsilon$ for some small $\epsilon >0$ and
four other facets remain to be the same.
Then, the fiber corresponding to $(0,0,0)$ is not monotone, we can see that there are four normal vectors 
at the energy level $2\pi$ and $2\pi(1-\epsilon)$ each. One can show that the fiber corresponding to
$(0,0,0)$ is non-displaceable, in fact by the standard Floer cohomology, from the cancellation argument.  
\item Consider the case of $x+y+z =1-\epsilon$, $x=y=z=1-\epsilon$. In this case
the fiber corresponding to $(0,0,0)$ is non-displaceable, using the Floer cohomology with $\widetilde{B}$
since cancellation arguments work with the help of $e^{\pi i }$ as in the previous example.
\end{enumerate}

\bibliographystyle{amsalpha}

\end{document}